\DeclareMathOperator{\rank}{rank}
\def\etal{\mbox{\em et al.}}
\begin{document}

\title{On quantification of systemic redundancy in reliable systems
}

\titlerunning{On quantification of systemic redundancy in reliable systems}        

\author{Getachew K. Befekadu}

\author{Getachew K. Befekadu \and Panos~J.~Antsaklis
}

\institute{G. K. Befekadu~ ({\large\Letter}\negthinspace) \at
	          Department of Electrical Engineering, University of Notre Dame, Notre Dame, IN 46556, USA. \\
	          \email{gbefekadu1@nd.edu}           
	          \and
	          P. J. Antsaklis \at
	          Department of Electrical Engineering, University of Notre Dame, Notre Dame, IN 46556, USA. \\
	          \email{antsaklis.1@nd.edu}           
}

\date{February 15, 2015}

\maketitle

\begin{abstract}
In this paper, we consider the problem of quantifying systemic redundancy in reliable systems having multiple controllers with overlapping functionality. In particular, we consider a multi-channel system with multi-controller configurations -- where controllers are required to respond optimally, in the sense of best-response correspondence, a {\it reliable-by-design} requirement, to non-faulty controllers so as to ensure or maintain some system properties. Here we introduce a mathematical framework, based on the notion of relative entropy of probability measures associated with steady-state solutions of Fokker-Planck equations for a family of stochastically perturbed multi-channel systems, that provides useful information towards a systemic assessment of redundancy in the system.  
\keywords{Entropy \and Fokker-Planck equation \and  Liouville equation \and quantification of systemic redundancy \and random perturbation \and relative entropy \and reliable systems}
\end{abstract}

\section{Introduction} \label{S1}
The notion of redundancy, which promotes robustness by ``backing-up" important functions of systems, together with its systemic quantification, has long been recognized as an essential design philosophy by researchers in different fields of studies (to mention a few, e.g., see \cite{KraP02}, \cite{Tau92} and \cite{Wag05} for related discussions in biological systems; see also \cite{CarD99}, \cite{KanB11}, \cite{SieS98} and \cite{RanLOT11} for related discussions in engineering systems). In this paper, we consider the problem of quantifying systemic redundancy in reliable systems having multiple controllers with overlapping functionality. To be more specific, we consider a multi-channel system with multi-controller configurations -- where controllers are required to respond optimally, a {\it reliable-by-design} requirement, to non-faulty controllers so as to maintain the stability of the system when there is a single failure in any of the control channels. 

Here we introduce a mathematical framework, based on the notion of relative entropy of probability measures associated with steady-state solutions of the Fokker-Planck equations for a family of stochastically perturbed multi-channel systems, that provides useful information towards a systemic assessment of redundancy in the system. For such steady-state solutions of the Fokker-Planck equations, we establish a quantifiable redundancy measure by using the difference between the average relative entropy of the steady-state probability measures, with respect to any single controller failure in the system, and the entropy of the steady-state probability measure under a nominal operating condition, i.e., without any single controller failure in the system. Moreover, we determine the asymptotic behavior of the systemic redundancy measure in the multi-channel system as the random perturbation decreases to zero, where such an asymptotic result can be related to the solutions of controlled Liouville equations for the underlying original unperturbed multi-channel system.

It is worth mentioning that some interesting studies on systemic measures, based on information theory, have been reported in literature (e.g., see \cite{ToSE99}, \cite{LiDHKY12} or \cite{EdekG01} in the context of complexity, degeneracy and redundancy measures in biological systems; see \cite{Kit04} or \cite{Kit07} in the context of robustness in biological systems; and see also \cite{Bru83} or \cite{Cru12} for related discussion on the complexity measure for trajectories in dynamical systems). Moreover, such studies have also provided some useful information in characterizing or understanding the systemic measures (based on mutual information between appropriately partitioned input and output spaces) in systems with multiple subsystems/modules having overlapping functionality. Note that the rationale behind our framework follows in some sense the settings of these papers. However, to our knowledge, this problem has not been addressed in the context of reliable systems with multi-controller configurations having ``overlapping or backing-up" functionality, and it is important because it provides a framework for quantifying or gauging systemic redundancy measure in multi-channel systems, for example, when there is a single channel failure in the system.

This paper is organized as follows. In Section~\ref{S2}, we present some preliminary results that are useful for our main results. Section~\ref{S3} presents our main results, where we introduce a mathematical framework that provides useful information towards a systemic assessment of redundancy in reliable systems. This section also contains a result on the asymptotic behavior of the systemic redundancy measure in the multi-channel system as the random perturbation decreases to zero.

\section{Preliminary results} \label{S2}

Consider the following continuous-time multi-channel system  
\begin{align} 
 \dot{x}(t) &= A x(t) + \sum\nolimits_{i=1}^N B_i u_i(t), \quad x(0)=x_0,  \label{Eq1}
\end{align}
where $x \in \mathbb{R}^{d}$ is the state, $u_i \in \mathbb{R}^{r_i}$ is the control input to the $i$th-channel.

Let $S$ be a compact manifold in $\mathbb{R}^{d}$ and let $\rho_0(x) \triangleq \rho(0, x) > 0$, with $\int_{S} \rho_0(x)dx=1$, be an initial density function. Further, let $\psi$ be a smooth function $\psi \colon S \rightarrow \mathbb{R}^{+}$ having compact support, then the expected value of $\psi$ at some future time $t > 0$ is given by
\begin{align}
E \bigl\{ \psi(x) \bigr\} = \int_{S} \psi(x) \rho(t, x) dx. \label{Eq2} 
\end{align}
Moreover, if we take the time derivative of the above equation and make use of integration by parts, then we will have
\begin{align}
\int_{S} \psi(x) \frac{\partial \rho(t, x)}{\partial t} dx = - \int_{S} \psi(x) \Bigl \langle \frac{\partial }{\partial x}, \Bigl (A x + \sum\nolimits_{i=1}^N B_i u_i\Bigr)\rho(t, x) \Bigr \rangle dx. \label{Eq3} 
\end{align}
Since $\psi(x)$ is an arbitrary function, we can rewrite the above equation as a first-order partial differential equation (which is also known as the Liouville equation)
\begin{align}
 \frac{\partial \rho(t, x)}{\partial t} = -  \Bigl \langle \frac{\partial }{\partial x}, \Bigl (A x + \sum\nolimits_{i=1}^N B_i u_i\Bigr)\rho(t, x) \Bigr \rangle.  \label{Eq4} 
\end{align}
\begin{remark} \label{R1}
Here we remark that the above first-order partial differential equation describes how the density function $\rho(t, x)$ evolves in time (i.e., Equation~\eqref{Eq4} describes an evolution equation on $L_1\bigl(\mathbb{R}^{d}\bigr)$ under a flow defined by the deterministic system of \eqref{Eq1}). Furthermore, it is easy to verify that
\begin{align*}
\rho(t, x) > 0, \quad t \ge 0,
\end{align*}
and further it satisfies
\begin{align*}
\frac{d}{d t} \int_{S} \rho(t, x) dx = 0.
\end{align*}
\end{remark}

Notice that the partial derivative with respect to $x$ in \eqref{Eq4} depends on whether the input controls $u_i$ are expressed as open-loop functions (i.e., $u_i=u_i(t)$ for $i=1,2, \ldots, N$) or as closed-loop functions (i.e., $u_i=u_i(t,x)$ for $i=1,2, \ldots, N$); and, as a result of this, the solution for $\rho(t, x)$ depends on the type of input controls used in the system.

\begin{remark} \label{R2}
Recently, using a class of variational problems, the author in \cite{Broc07} has considered the Liouville equations that involve control terms. Moreover, such a formulation is useful for relating the behavior of the solutions of the Liouville equation to that of the behavior of the underlying differential equation of the system.
\end{remark}

In what follows, we recall some known results that will be used for our main results (e.g., see \cite{Csi67} or \cite{Les14}).

\begin{definition}\label{D1}
Let $\mu$ be a probability measure on $\mathbb{R}^d$ with respect to the density function $\rho(t, x)$ which satisfies the Liouville equation in \eqref{Eq4} starting from an initial density function $\rho_0(x_0)$. Then, the entropy of $\mu$ (with respect to Lebesgue measure) is defined by
\begin{align}
 H\bigl(\mu\bigr) = - \int_{S} \rho(t, x) \log_2 \rho(t, x) dx, \quad t \ge 0. \label{Eq5}
\end{align}
\end{definition}

\begin{remark} \label{R3}
In general, we have the following inequality
\begin{align*}
- \int_{S}  \rho_1(t, x) \log_2 \rho_1(t, x) dx  \le - \int_{S} \rho_1(t, x) \log_2 \rho_2(t, x) dx,
\end{align*}
for any two density functions $\rho_1(t, x)$ and $\rho_2(t, x)$ that satisfy \eqref{Eq4} starting from $\rho_{1,0}(x_0)$ and $\rho_{2,0}(x_0)$, respectively.
\end{remark}

\begin{definition}\label{D2}
Let $\mu_1$ and $\mu_2$ be two probability measures on $\mathbb{R}^d$ with respect to the density functions $\rho_1(t, x)$ and $\rho_2(t, x)$ that satisfy the Liouville equation in \eqref{Eq4} starting from initial density functions $\rho_{1,0}(x_0)$ and $\rho_{2,0}(x_0)$, respectively. Then, the relative entropy of $\mu_2$ with respect to $\mu_1$ is defined by
\begin{align}
 D\bigl(\mu_2 \,\Vert\, \mu_1\bigr) & = \int_{S} \rho_2(t, x) \log_2 \left (\frac{\rho_2(t, x)}{\rho_1(t, x)}\right) dx\notag \\
                                                       &= \int_{S} \Bigl(\rho_2(t, x) \log_2 \rho_2(t, x) - \rho_2(t, x) \log_2 \rho_1(t, x) \Bigr) dx, \quad t \ge 0.  \label{Eq6}
\end{align}
\end{definition}

\begin{remark} \label{R4}
Note that the relative entropy (also called the Kullback-Leibler distance) $D\bigl(\mu_2 \,\Vert\, \mu_1\bigr)$, which measures the deviation of $\mu_2$ with respect to the probability measure $\mu_1$, is nonnegative, i.e., $D \bigl(\mu_2 \,\Vert\, \mu_1\bigr) \ge 0$, and $D\bigl(\mu_2 \,\Vert\, \mu_1\bigr) = 0$ if and only if $\mu_2 = \mu_1$.
\end{remark}

Next, consider the following stochastically perturbed multi-channel system

\begin{align}
  dx^{\epsilon}(t) = A x^{\epsilon}(t) dt + \sum\nolimits_{i=1}^N B_i u_i(t) dt + \epsilon \sigma(x^{\epsilon}(t)) dW(t), \quad x^{\epsilon}(0)=x_0, \label{Eq7}
\end{align}
where
\begin{itemize}
\item[-] $x^{\epsilon}(\cdot)$ is an $\mathbb{R}^{d}$-valued diffusion process, $\epsilon > 0$ is a small parameter that represents the level of random perturbation in the system, 
\item[-] $\sigma \colon \mathbb{R}^{d} \rightarrow \mathbb{R}^{d \times m}$ is Lipschitz continuous with the least eigenvalue of $\sigma(\cdot)\sigma^T(\cdot)$ uniformly bounded away from zero, i.e., 
\begin{align*}
 \sigma(x)\sigma^T(x)  \ge \kappa I_{d \times d} , \quad \forall x \in \mathbb{R}^{d},
\end{align*}
for some $\kappa > 0$,
\item[-] $W(\cdot)$ (with $W(0)=0$) is an $m$-dimensional standard Wiener process, and
\item[-]  $u_i(\cdot)$ is a $U_i$-valued measurable control process to the $i$th-channel (i.e., an admissible control from the set $U_i \subset \mathbb{R}^{r_i}$) such that, for all $t > s$, $W(t) - W(s)$ is independent of $u_i(\nu)$ for $\nu > s$.
\end{itemize}

Note that the time evolution of the density function $\rho^{\epsilon}(t, x)$ associated with \eqref{Eq7} satisfies the following second-order partial differential equation (i.e., the Fokker-Planck equation)  
\begin{align}
 \frac{\partial \rho^{\epsilon}(t, x)}{\partial t}  =  - \Bigl \langle \frac{\partial} {\partial x},\, \Bigl( A x + \sum\nolimits_{i=1}^N B_i u_i\Bigr) \rho^{\epsilon}(t, x) \Bigr \rangle + & \frac{\epsilon^2}{2} \Bigl \langle \frac{\partial^2} {\partial x^2},\, \sigma(x)\sigma^T(x) \rho^{\epsilon}(t, x) \Bigr \rangle, \notag \\ 
 & \quad\quad \rho^{\epsilon}(0, x) \,\, \text{is given}. \label{Eq8}
\end{align}

In this paper, among all solutions of the above Fokker-Planck equation, we will only consider the steady-state solution that satisfies the following stationary Fokker-Planck equation\footnote{For example, see \cite{Kif74} or \cite{VenFre70} for additional discussion on the limiting behavior of invariant measures for systems with small random perturbation.}
\begin{align}
0  =  - \Bigl \langle \frac{\partial} {\partial x},\, \Bigl( A x + \sum\nolimits_{i=1}^N B_i u_i\Bigr) \rho_{\ast}^{\epsilon}(x) \Bigr \rangle + & \frac{\epsilon^2}{2} \Bigl \langle \frac{\partial^2} {\partial x^2},\, \sigma(x)\sigma^T(x) \rho_{\ast}^{\epsilon}(x) \Bigr \rangle, \notag \\
 \rho_{\ast}^{\epsilon}(x) > 0, & \quad \int_{\mathbb{R}^d} \rho_{\ast}^{\epsilon}(x) dx = 1. \label{Eq9}
\end{align}

In the following section, i.e., Section~\ref{S3}, such a steady-state solution, together with the solution of the Liouville equation, will allow us to provide useful information towards a systemic assessment of redundancy in the reliable multi-channel systems with small random perturbation.

\section{Main results} \label{S3}
In what follows, we consider a particular class of stabilizing state-feedbacks that satisfies\footnote{$\operatorname{Sp}(A)$ denotes the spectrum of a matrix $A \in \mathbb{R}^{d \times d}$, i.e., $\operatorname{Sp}(A) = \bigl\{s \in \mathbb{C}\,\bigl\lvert \,\rank(A - sI) < d \bigr\}$.}
\begin{align}
  \mathcal{K} \subseteq \Biggl\{\bigl(K_1, K_2, \ldots, K_N\bigr) &  \in \prod\nolimits_{i=1}^N \mathbb{R}^{r_i \times d} \biggm \lvert \operatorname{Sp}\Bigl(A + \sum\nolimits_{i=1}^N B_{i} K_i\Bigr) \subset \mathbb{C}^{-} \,\, \& \notag \\
    & \operatorname{Sp}\Bigl(A + \sum\nolimits_{i \neq j}^N B_{i} K_i\Bigr) \subset \mathbb{C}^{-},\,\, j = 1,2, \ldots, N \Biggr\}. \label{Eq10}
\end{align}
\begin{remark} \label{R5}
We remark that the above class of state-feedbacks is useful for maintaining the stability of the closed-loop system both when all of the controllers work together, i.e., $\bigl(A + \sum\nolimits_{i=1}^N B_{i} K_i\bigr)$, as well as when there is a single-channel controller failure in the system, i.e., $\bigl(A + \sum\nolimits_{i \neq j}^N B_{i} K_i\bigr)$ for $j=1,2, \ldots, N$. Moreover, such a class of state-feedbacks falls within the redundant/passive fault tolerant controller configurations with overlapping functionality, i.e., a reliable-by-design requirement in the system (e.g., see \cite{BefGA14} or \cite{FujBe09}).   
\end{remark}

Note that, for such a class of state-feedbacks, the density functions $\rho^{(j)}(t, x)$ for $j = 0,1, \ldots, N$, satisfy the following family of Liouville equations
\begin{align}
 \frac{\partial \rho^{(0)}(t, x)}{\partial t} = -  \Bigl \langle \frac{\partial }{\partial x}, \Bigl (Ax + \sum\nolimits_{i=1}^N B_i K_i x \Bigr) \rho^{(0)}(t, x) \Bigr \rangle \label{Eq11}
\end{align}
and 
\begin{align}
 \frac{\partial \rho^{(j)}(t, x)}{\partial t} = -  \Bigl \langle \frac{\partial }{\partial x}, \Bigl (Ax + \sum\nolimits_{i \neq j}^N B_i K_i x \Bigl) \rho^{(j)}(t, x) \Bigr \rangle, \,\, j=1,2, \ldots, N, \label{Eq12}
\end{align}
 starting from an initial density function $\rho_0(x_0)$. Moreover, using the fact that the solution of 
\begin{align}
  \dot{x}(t) = \Bigl (A + \sum\nolimits_{i=1}^N B_i K_i \Bigr)x(t), \quad x(0)=x_0, \label{Eq13}
\end{align}
can be written as
\begin{align}
  x(t) = \exp(A t) x(0)+ \int_{0}^t \exp(A (t - \lambda)) \sum\nolimits_{i=1}^N B_i K_i x(\lambda) d\lambda. \label{Eq14}
\end{align}
Then, the density function $\rho^{(0)}(t, x)$ (when $j=0$) corresponding to the Liouville equation in \eqref{Eq11}, with an initial density $\rho_0(x_0)$, is given by
\begin{align}
  \rho^{(0)}(t, x) = \frac{1}{\exp(\operatorname{tr}A t)} \rho_0 \biggl(\exp(- A t)\biggl(x(t) &- \int_{0}^t \exp(A (t - \lambda)) \sum\nolimits_{i=1}^N B_i K_i x(\lambda) d\lambda \biggr ) \biggr). \label{Eq15}
\end{align}
Similarly, the density functions $\rho^{(j)}(t, x)$ (when $j=1,2, \ldots, N$) corresponding to the Liouville equations in \eqref{Eq12} are given by
\begin{align}
  \rho^{(j)}(t, x) = \frac{1}{\exp(\operatorname{tr}A t)} \rho_0 \biggl(\exp(- A t)\biggl(x(t) &- \int_{0}^t \exp(A (t - \lambda))\sum\nolimits_{i \neq j}^N B_i K_i x(\lambda) d\lambda \biggr ) \biggr). \label{Eq16}
\end{align}

On the other hand, for a given random perturbation $(\sigma, \epsilon)$, the steady-state density functions $\rho^{(\epsilon, j)}(x) $ for $j = 0,1, \ldots, N$, satisfy the following family of stationary Fokker-Planck equations
\begin{align}
0  =  - \Bigl \langle \frac{\partial} {\partial x},\, \Bigl ( A x + \sum\nolimits_{i=1}^N B_i K_i x \Bigr) \rho^{(\epsilon, 0)}(x) \Bigr \rangle + & \frac{\epsilon^2}{2} \Bigl \langle \frac{\partial^2} {\partial x^2},\, \sigma(x)\sigma^T(x) \rho^{(\epsilon, 0)}(x) \Bigr \rangle, \notag \\
 \rho^{(\epsilon, 0)}(x) > 0, \quad &  \int_{\mathbb{R}^d} \rho^{(\epsilon, 0)}(x) dx = 1, \label{Eq17}
\end{align}
and
\begin{align}
0  =  - \Bigl \langle \frac{\partial} {\partial x},\, \Bigl ( A x + \sum\nolimits_{i \neq j}^N B_i K_i x \Bigr ) \rho^{(\epsilon, j)}(x) \Bigr \rangle + & \frac{\epsilon^2}{2} \Bigl \langle \frac{\partial^2} {\partial x^2},\, \sigma(x)\sigma^T(x) \rho^{(\epsilon, j)}(x) \Bigr \rangle, \notag \\
 \rho^{(\epsilon, j)}(x) > 0, \quad \int_{\mathbb{R}^d} \rho^{(\epsilon, j)}(x) dx = 1,& \quad j = 1, 2, \ldots, N. \label{Eq18}
\end{align}

\begin{remark} \label{R6}
Note that $\sigma(x)$ is Lipschitz continuous, with the least eigenvalue of $\sigma(\cdot)\sigma^T(\cdot)$ uniformly bounded away from zero. Further, if it is twice differentiable on $\mathbb{R}^d$, the uniqueness for smooth steady-state solutions for \eqref{Eq17} and \eqref{Eq18} depends on the behavior of the original unperturbed multi-channel system as well as on the type of input controls used in the system.
\end{remark}

The following proposition provides a condition on the uniqueness of the solutions for the stationary Fokker-Planck equations. 
\begin{proposition} \label{P1}
Suppose that there exists at least one $N$-tuple stabilizing state-feedbacks that satisfies the conditions in \eqref{Eq10}. Then, there exist unique smooth density functions $\rho^{(\epsilon, j)}(x)$ for $j = 0,1, \ldots, N$, with respect to $(\sigma, \epsilon)$, corresponding to the stationary Fokker-Planck equations in \eqref{Eq17} and \eqref{Eq18}. 

Furthermore, the relative entropy of $\mu^{(\epsilon, i)}$ for $i = 1, 2, \ldots, N$, with respect to $\mu^{(\epsilon, 0)}$ (i.e., probability measures associated with the density functions $\rho^{(\epsilon, i)}(x)$ and $\rho^{(\epsilon, 0)}(x)$, respectively) is finite, i.e.,
\begin{align}
  D\bigl(\mu^{(\epsilon, i)} \,\Vert\, \mu^{(\epsilon, 0)}\bigr) < +\infty. \label{Eq19}
\end{align}
\end{proposition}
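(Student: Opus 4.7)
The strategy is to combine three classical ingredients: a Lyapunov drift argument driven by the Hurwitz property of the closed-loop dynamics, uniform ellipticity of the diffusion, and standard elliptic regularity.

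First, I would denote the closed-loop matrices $A_0 = A + \sum_{i=1}^N B_i K_i$ and $A_j = A + \sum_{i \neq j}^N B_i K_i$ for $j = 1, \ldots, N$; by \eqref{Eq10} each is Hurwitz, so there exist symmetric positive-definite $P_j$ solving the Lyapunov equations $A_j^T P_j + P_j A_j = -I$. Taking $V_j(x) = x^T P_j x$ and applying the generator $\mathcal{L}_j$ of the diffusion \eqref{Eq7} (with $u_i = K_i x$ and the $j$-th channel removed when $j \geq 1$) yields a Has'minskii-type drift inequality $\mathcal{L}_j V_j(x) \leq -c_1 \|x\|^2 + c_2$ for some $c_1, c_2 > 0$. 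Together with the uniform ellipticity $\sigma \sigma^T \geq \kappa I$ this gives positive recurrence of $x^{\epsilon}(\cdot)$ on $\mathbb{R}^d$, and hence the existence and uniqueness of an invariant probability measure $\mu^{(\epsilon,j)}$ for each $j$.

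Second, smoothness of the densities $\rho^{(\epsilon,j)}$ follows from elliptic regularity: under the (implicit) assumption of Remark~\ref{R6} that $\sigma$ is twice differentiable, the formal adjoint operator on the right-hand side of \eqref{Eq17}--\eqref{Eq18} is uniformly elliptic with smooth coefficients, so every distributional solution is $C^{\infty}$ by Weyl's lemma, and strict positivity follows from the strong maximum principle (or Harnack's inequality). This produces the unique smooth $\rho^{(\epsilon,j)}$ claimed.

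Third, for finiteness of the relative entropy I would derive matching Gaussian-type tail bounds. The Lyapunov estimate above upgrades to an exponential moment bound $\int \exp(\alpha V_j(x)) \rho^{(\epsilon,j)}(x) \, dx < \infty$ for $\alpha > 0$ small, which translates into a Gaussian-type upper bound on $\rho^{(\epsilon,i)}$. An Aronson-type Gaussian lower bound on $\rho^{(\epsilon,0)}$, available because its generator is uniformly elliptic with smooth coefficients and only linearly growing drift, then ensures that $\log (\rho^{(\epsilon,i)}(x) / \rho^{(\epsilon,0)}(x))$ grows at most quadratically in $\|x\|$. Since $\mu^{(\epsilon,i)}$ has finite second (and higher) moments by the same Lyapunov bound, integrating as in \eqref{Eq6} converges, giving \eqref{Eq19}. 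The main obstacle is precisely the Gaussian lower bound on $\rho^{(\epsilon,0)}$: upper tails are cheap from Lyapunov, but matching lower tails need quantitative heat-kernel estimates that carefully balance the linear drift against the uniformly elliptic diffusion; without such a bound one risks $-\log \rho^{(\epsilon,0)}$ growing faster than quadratically, which would destroy integrability in \eqref{Eq6}. Everything else fits into the standard Has'minskii plus elliptic regularity package.
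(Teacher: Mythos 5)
Your route to existence, uniqueness, smoothness, and positivity of the densities is essentially the paper's: the paper also invokes a Lyapunov function $V$ (it does not construct it explicitly from the Lyapunov equations as you do, but footnotes that a common $V$ is not even needed) to obtain drift inequalities of Has'minskii type, and then delegates uniqueness, $C^{\infty}$ regularity, and strict positivity of $\rho^{(\epsilon,j)}$ to Theorems~2.1 and~5.7 of Bogachev--Krylov--R\"{o}ckner \cite{BogKrB09}, exactly the elliptic-regularity-plus-Harnack package you describe. Where you genuinely diverge is the final claim \eqref{Eq19}: the paper merely observes that $\mu^{(\epsilon,i)} \ll \mu^{(\epsilon,0)}$ and concludes ``as a result'' that $D\bigl(\mu^{(\epsilon,i)} \,\Vert\, \mu^{(\epsilon,0)}\bigr) < +\infty$. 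That inference is a non sequitur --- mutual absolute continuity of two probability measures does not bound their relative entropy, which can be infinite when the density ratio has heavy logarithmic tails. Your tail-matching argument (exponential moments of $V_j$ under $\mu^{(\epsilon,i)}$ giving a Gaussian-type upper bound, an Aronson-type lower bound on $\rho^{(\epsilon,0)}$, hence at most quadratic growth of $\log(\rho^{(\epsilon,i)}/\rho^{(\epsilon,0)})$ integrated against a measure with finite second moments) is precisely the missing content, and you are right to single out the Gaussian lower bound on $\rho^{(\epsilon,0)}$ as the delicate step; for linear Hurwitz drift with uniformly elliptic, bounded-below diffusion such bounds are available, so your plan closes a gap that the paper leaves open rather than reproducing its (insufficient) one-line justification.
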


\begin{proof}
Suppose that $\sigma(x)$ is twice differentiable on $\mathbb{R}^d$. Let the $N$-tuple of state-feedbacks $\bigl(K_1, K_2, \ldots, K_N \bigr)$ satisfy the conditions in \eqref{Eq10}. Then, the origin is a stable equilibrium point for the original unperturbed multi-channel system in \eqref{Eq1} (with respect to this particular set of state-feedbacks). Moreover, for sufficiently small $\epsilon > 0$, there exists a Lyapunov function $V(x) > 0$, with $\lim_{\vert x \vert \rightarrow \infty} V(x) = +\infty$, such that\footnote{Note that the assumption of a common Lyapunov function $V(x)$ is not necessary in \eqref{Eq20} and \eqref{Eq21}.}
\begin{align}
 \Bigl \langle \Bigl ( A x + \sum\nolimits_{i=1}^N B_i K_i x\Bigl) \frac{\partial} {\partial x},\, V(x)\Bigr \rangle + \frac{\epsilon^2}{2} \Bigl \langle \sigma(x)\sigma^T(x) \frac{\partial^2} {\partial x^2},\, V(x)\Bigr \rangle < - \eta, \label{Eq20}
\end{align}
and
\begin{align}
 \Bigl \langle \Bigl ( A x + \sum\nolimits_{i \neq j}^N B_i K_i x \Bigr ) \frac{\partial} {\partial x},\, V(x)\Bigr \rangle + \frac{\epsilon^2}{2} \Bigl \langle \sigma(x)\sigma^T(x) \frac{\partial^2} {\partial x^2},\, V(x)\Bigr \rangle < - \eta, \notag \\
 j =1,2, \dots, N, \label{Eq21}
\end{align}
for all $x \in \mathbb{R}^d \backslash \{0\}$ and for some constant $\eta > 0$. Then, from Theorem~2.1 and Theorem~5.7 in \cite{BogKrB09}, the stationary measures $\mu^{(\epsilon, j)}$ for $j =0, 1, \dots, N$, of the Fokker-Planck equations in \eqref{Eq17} and \eqref{Eq18} uniquely admit smooth density functions $\rho^{(\epsilon, j)} \in C^{\infty}(\mathbb{R}^d)$ for $j =0, 1, \dots, N$, with $\rho^{(\epsilon, j)} > 0$ on $\mathbb{R}^d$, i.e.,
\begin{align*}
\mu^{(\epsilon, j)}(dx) = \rho^{(\epsilon, j)}(x) dx, \quad j =0, 1, \dots, N.
\end{align*}
Furthermore, the measure $\mu^{(\epsilon, i)}$ for $i \in \{1, 2, \ldots, N\}$, is absolutely continuous with respect to $\mu^{(\epsilon, 0)}$ (i.e., $\mu^{(\epsilon, i)} \ll \mu^{(\epsilon, 0)}$, $i = 1, 2, \ldots, N$) and, as a result, the relative entropy of $\mu^{(\epsilon, i)}(x)$ with respect to $\mu^{(\epsilon, 0)}(x)$ satisfies the following
\begin{align*}
  D\bigl(\mu^{(\epsilon, i)} \,\Vert\, \mu^{(\epsilon, 0)}\bigr) < +\infty, \quad i = 1, 2, \ldots, N.
 \end{align*}
This completes the proof.
\end{proof}

Next, let us define the systemic redundancy $r_{(\sigma, \epsilon)} \in \mathbb{R}$ (with respect to the random perturbation $(\sigma, \epsilon)$) as follows
\begin{align}
  r_{(\sigma, \epsilon)} = \frac{1}{2N} \sum\nolimits_{i=1}^N D\bigl(\mu^{(\epsilon, i)} \,\Vert\, \mu^{(\epsilon, 0)}\bigr) - H\bigl(\mu^{(\epsilon, 0)}\bigr), \label{Eq22}
\end{align}
where $\bigl(1/2N\bigr) \sum\nolimits_{i=1}^N D\bigl(\mu^{(\epsilon, i)} \,\Vert\, \mu^{(\epsilon, 0)}\bigr)$ represents the average relative entropy of probability measures with respect to any single failure in the control channels.

\begin{remark} \label{R7}
Here we remark that $r_{(\sigma, \epsilon)}$ provides useful information in characterizing the systemic redundancy (i.e., with respect to the state-space and partitioned input spaces) in the system with multi-controller configurations having overlapping functionality, i.e., a requirement to maintain the stability of the system when there is a single failure in any of the control channels.
\end{remark}

Then, we have the following result on the asymptotic property of the systemic redundancy measure $r_{(\sigma, \epsilon)}$ as the random perturbation decreases to zero (i.e., as $\epsilon \rightarrow 0$).
\begin{corollary}\label{C1}
Suppose that Proposition~\ref{P1} holds, then the redundancy $r_{(\sigma, \epsilon)}$ satisfies the following asymptotic property
\begin{align}
  r_{(\sigma, \epsilon)} \rightarrow r_{\infty} \quad \text{as} \quad \epsilon \rightarrow 0, \label{Eq23}
\end{align}
where
\begin{align}
  r_{\infty} &=  \lim_{ t \rightarrow \infty} \Biggl[\underbrace{\frac{1}{2N} \sum\nolimits_{i=1}^N D\bigl(\mu^{(i)} \,\Vert\, \mu^{(0)}\bigr) - H\bigl(\mu^{(0)}\bigr)}_{\substack{\triangleq \, r_t, ~ t \ge 0}} \Biggr], \label{Eq24}
\end{align}
and $\mu^{(j)}$, $j=0, 1,  \ldots, N$, are probability measures with respect to the density functions $\rho^{(j)}(t, x)$, $j=0, 1, \ldots, N$, respectively, that satisfy the Liouville equations in \eqref{Eq11} and \eqref{Eq12} starting from an initial density function $\rho_0(x_0)$. 
\end{corollary}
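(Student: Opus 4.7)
The plan uses a double-limit argument via a time-dependent interpolation. For each $\epsilon > 0$ and each $j \in \{0, 1, \ldots, N\}$, let $\mu^{(\epsilon, j)}_t$ denote the probability measure with density $\rho^{(\epsilon, j)}(t, \cdot)$ solving the Fokker--Planck equation \eqref{Eq8} for closed-loop drift $\hat A_0 = A + \sum\nolimits_{i=1}^N B_i K_i$ (when $j = 0$) or $\hat A_j = A + \sum\nolimits_{i \neq j}^N B_i K_i$ (when $j \ge 1$), starting from $\rho_0$, and set
\begin{equation*}
r_{(\sigma,\epsilon,t)} \triangleq \frac{1}{2N}\sum\nolimits_{i=1}^N D\bigl(\mu^{(\epsilon,i)}_t \,\Vert\, \mu^{(\epsilon,0)}_t\bigr) - H\bigl(\mu^{(\epsilon,0)}_t\bigr).
\end{equation*}
Under Proposition~\ref{P1} (using the Hurwitz property of each $\hat A_j$ together with uniform ellipticity of $\sigma\sigma^T$), the associated elliptic diffusion is exponentially ergodic, so $\lim_{t \to \infty} r_{(\sigma,\epsilon,t)} = r_{(\sigma,\epsilon)}$ for every $\epsilon > 0$. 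At fixed $t$, as $\epsilon \to 0$ the second-order term in \eqref{Eq8} vanishes and $\rho^{(\epsilon,j)}(t, \cdot) \to \rho^{(j)}(t, \cdot)$, the Liouville density from \eqref{Eq11}--\eqref{Eq12}, so continuity of the entropy and Kullback--Leibler functionals along these regular $\epsilon$-perturbations yields $\lim_{\epsilon \to 0} r_{(\sigma,\epsilon,t)} = r_t$.

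The desired identity $\lim_{\epsilon \to 0} r_{(\sigma,\epsilon)} = \lim_{t \to \infty} r_t$ is therefore the interchange
\begin{equation*}
\lim_{\epsilon \to 0} \lim_{t \to \infty} r_{(\sigma,\epsilon,t)} = \lim_{t \to \infty} \lim_{\epsilon \to 0} r_{(\sigma,\epsilon,t)}.
\end{equation*}
To justify it, I would invoke the classical small-noise theory of \cite{Kif74} and \cite{VenFre70}: since each $\hat A_j$ is Hurwitz, the deterministic equilibrium is the origin, and the stationary measures $\mu^{(\epsilon,j)}$ concentrate on $\{0\}$ as $\epsilon \to 0$ with fluctuations controlled by the linearised diffusion (i.e., by the Lyapunov equation $\hat A_j \Sigma^{(\epsilon,j)} + \Sigma^{(\epsilon,j)} \hat A_j^T + \epsilon^2 \sigma(0)\sigma^T(0) = 0$). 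On the Liouville side, the explicit pushforward formulas \eqref{Eq15}--\eqref{Eq16} show that $\rho^{(j)}(t, \cdot)$ likewise contracts onto $\{0\}$, with entropy admitting the closed form $H(\mu^{(0)}_t) = H(\mu_0) + t\,\operatorname{tr}(\hat A_0)$ and analogous expressions for the relative-entropy terms. Matching these two asymptotic regimes, together with continuity of relative entropy under weak convergence with controlled second moments, would identify the two iterated limits.

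The principal obstacle is that $H(\mu^{(\epsilon,0)})$ and each $D\bigl(\mu^{(\epsilon,i)} \,\Vert\, \mu^{(\epsilon,0)}\bigr)$ individually diverge as $\epsilon \to 0$ (and symmetrically as $t \to \infty$ on the Liouville side), because the underlying measures become singular; only the specific linear combination in \eqref{Eq22} stabilises to a finite value. Establishing the interchange therefore requires uniformly tracking how the singular pieces -- logarithmic in $\epsilon$ on one side, linear in $t$ on the other -- cancel within $r_{(\sigma,\epsilon,t)}$. For general state-dependent $\sigma$ this would be handled by a Freidlin--Wentzell / WKB expansion of $\rho^{(\epsilon,j)}$ around the origin, paired with a direct change-of-variables computation on the Liouville side, with Proposition~\ref{P1} providing the regularity needed for the entropy functionals to pass to the limits.
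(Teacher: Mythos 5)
Your high-level framing agrees with the only indication of proof the paper gives (Remark~\ref{R8} says the argument is ``based on the idea of comparing the solutions of the Liouville equations to that of the steady-state solutions of the Fokker-Planck equations as $\epsilon \rightarrow 0$'' and is otherwise omitted), and reading the statement as an interchange of the limits $t \rightarrow \infty$ and $\epsilon \rightarrow 0$ is the right way to set it up. However, the proposal stops exactly where the proof would have to begin: the interchange is never justified, and the route you sketch rests on a claim that does not survive inspection. You assert that ``only the specific linear combination in \eqref{Eq22} stabilises to a finite value.'' It does not. Near the origin the stationary laws are approximately Gaussian with covariances $\epsilon^{2}\Sigma^{(j)}$ (from your own Lyapunov equations), and the Kullback--Leibler divergence between two centred Gaussians is invariant under the common rescaling $x \mapsto \epsilon x$; hence each $D\bigl(\mu^{(\epsilon,i)} \,\Vert\, \mu^{(\epsilon,0)}\bigr)$ tends to a \emph{finite} limit as $\epsilon \rightarrow 0$, while $-H\bigl(\mu^{(\epsilon,0)}\bigr) \sim -d\log_2\epsilon \rightarrow +\infty$. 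Nothing inside $r_{(\sigma,\epsilon)}$ cancels the entropy divergence, so $r_{(\sigma,\epsilon)} \rightarrow +\infty$.

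On the Liouville side the divergence of $r_t$ is linear in $t$: the pushforward formula gives $H\bigl(\mu^{(0)}\bigr) = H(\mu_0) + t\operatorname{tr}\bigl(A + \sum\nolimits_{i=1}^N B_iK_i\bigr)$ with negative trace (Hurwitz), and the relative-entropy terms pick up contributions of the form $t\operatorname{tr}(B_iK_i)$ plus a term involving $\rho_0$ composed with $\exp(-\hat A_0 t)\exp(\hat A_i t)$ (in your notation), which need not remain bounded since $\hat A_0$ and $\hat A_i$ do not commute. There is no canonical coupling between $t$ and $\epsilon$ under which a $\log_2\epsilon$ singularity can be matched against a linear-in-$t$ one, so the ``matching of singular pieces'' you propose cannot identify the two iterated limits with a common finite value. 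At best the corollary holds in the degenerate sense that both sides equal $+\infty$ --- in which case the entropy asymptotics alone suffice and the interchange machinery is unnecessary --- and, depending on the sign of the linear coefficient in $r_t$, it may fail outright. To make the argument work you would need to exhibit a normalization, absent from \eqref{Eq22} and \eqref{Eq24} as written, that renders both limits finite before attempting to show that they coincide.
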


\begin{remark} \label{R8}
The proof is based on the idea of comparing the solutions of the Liouville equations to that of the steady-state solutions of the Fokker-Planck equations as $\epsilon \rightarrow 0$, and thus it is omitted.
\end{remark}

\begin{remark} \label{R9}
Note that a closer look at Equation~\eqref{Eq22} (see also Equations~\eqref{Eq23} and \eqref{Eq24} above) shows that $r_{(\sigma, \epsilon)} > r_t$, $\forall t \ge 0$, with respect to some perturbation $(\sigma, \epsilon)$ and further if $\epsilon_1 \le \epsilon_2$, then $r_{(\sigma, \epsilon_1)} \le r_{(\sigma, \epsilon_2)}$.
\end{remark}

\end{document}